\newtheorem{theorem}{Theorem}
\newtheorem{lemma}[theorem]{Lemma}
\newtheorem{corollary}[theorem]{Corollary}
\theoremstyle{definition}
\newtheorem{remark}{Remark}
\newtheorem{problem}{Problem}
\title{Index one minimal surfaces in positively curved $3$-manifolds}
\author{Antonio Ros}     
\thanks{This work is supported in part by the IMAG–Maria de Maeztu grant CEX2020-001105-M / AEI / 10.13039/501100011033, 
MICINN grant PID2020-117868GB-I00 and Junta de Andalucıa grant P18-FR4049.}
\begin{document}


\begin{abstract}
{\small We construct a Riemannian metric of positive sectional curvature on the \mbox{$3$-dimensional} projective space with a two-sided closed embedded minimal surface of 
\mbox{genus $3$,} \mbox{index $1$ and} nullity $0$.}

\noindent
{\it Mathematics Subject Classification:} 53A10, 58E12, 57M50.
\end{abstract}


\maketitle

\section{Introduction}

The study of the index of minimal surfaces in Riemannian three manifolds is a topic of 
{interest} in different contexts and in particular in Min-max theory for the area functional. If $M$ 
is a compact orientable Riemannian three-manifold with positive Ricci curvature,
then the index of closed two-sided minimal surfaces cannot be $0$ and Ketover, Marques and Neves
 \cite{ketovermarquesneves} prove that admits a closed embedded two-sided minimal surface $\Sigma$ of index $1$ realizing 
the Heegaard genus of $M$ such that its area 
give the width of the ambient metric. Hamilton, \cite{hamilton}, showed that such a $M$ is diffeomorphic to a spherical space form and 
therefore $Heegaard\, genus(M)\leq 2$. 
On the other hand, from the second variation formula of area it \mbox{follows} that the genus of an index 
$1$ minimal surface $\Sigma$ in $M$ is $\leq 3$.
This bound holds true even for $Ric\geq 0$, Ros \cite{ros1}, and it is known to be sharp for the cubic flat $3$-torus, Ross \cite{ross}. 
The geometric significance of the option $genus(\Sigma)=3$ is not clear  and
its relevance has been noted by Schoen and Neves \cite{neves}.

\vspace{.2cm}

In the case of constant curvature, Viana \cite{viana} proved that there is a positive $c$ such that, 
if $\Sigma\subset M^3$ is a closed $2$-sided embedded minimal surface of index $1$ and genus $3$ in a spherical space form $M$, 
then the order of the fundamental group of $M$ is bounded $|\pi_1(M)|\leq c$. 

\vspace{.2cm}

In this paper, we construct, for the first time, a closed minimal surface of genus $3$ and index $1$ in a positively curved ambient space. 
We prove the following:



\begin{theorem}
There is a Riemannian metric $g$ of positive sectional curvature on the real projective space
${R P}^3$ and a compact orientable embedded minimal surface $\Sigma$ of genus $3$ in $({RP}^3,g)$ with index $1$ and nullity $0$.
\label{teor0}
\end{theorem}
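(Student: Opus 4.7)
\emph{Proof plan.} The natural strategy is to construct the metric $g$ and the surface $\Sigma$ simultaneously, using a discrete isometry to force minimality. I would seek a metric $g$ on $\mathbb{RP}^3$ together with an isometric involution $\iota$ such that $\Sigma$ is a two-dimensional component of the fixed set $\mathrm{Fix}(\iota)$. As the fixed locus of an isometry, $\Sigma$ is then automatically totally geodesic (in particular minimal), closed, and embedded; and since $\iota$ is orientation-preserving on an orientable $3$-manifold, the $(-1)$-eigenspace of $d\iota$ trivializes the normal bundle, so $\Sigma$ is two-sided. The problem thereby splits into three subproblems: (i) realize a closed orientable surface of genus $3$ in $\mathbb{RP}^3$ as the fixed set of an involution; (ii) equip $\mathbb{RP}^3$ with a metric of positive sectional curvature invariant under this involution; (iii) control the Jacobi operator of $\Sigma$ so that it has exactly one negative eigenvalue and no zero eigenvalue.

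For step (i), I would pass to the universal cover $\pi: S^3 \to \mathbb{RP}^3$ and look for a $\mathbb{Z}_2 \times \mathbb{Z}_2$ configuration on $S^3$: the antipodal map $\sigma$ (which yields the quotient $\mathbb{RP}^3$) together with a commuting orientation-preserving involution $\widetilde\iota$ whose fixed set $\widetilde\Sigma$ is a connected orientable surface on which $\sigma$ acts freely. By Riemann-Hurwitz, if $\widetilde\Sigma$ has genus $5$ then $\widetilde\Sigma/\sigma$ has genus $3$ inside $\mathbb{RP}^3$. Natural candidates for $\widetilde\Sigma$ are highly symmetric Lawson surfaces $\xi_{m,n}$ with $mn=5$, or a Heegaard-type surface built ad hoc so as to carry both involutions. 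For step (ii), positivity of sectional curvature is an open condition in $C^2$, so starting from the round metric on $\mathbb{RP}^3$ (which is positively curved and admits a large isometry group) I would perform a carefully chosen equivariant deformation that either renders $\widetilde\Sigma$ totally geodesic or moves it into such a position, while monitoring the full curvature tensor.

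For step (iii), since $\Sigma$ is totally geodesic its Jacobi operator reduces to $L = \Delta + \mathrm{Ric}(\nu,\nu)$. Positivity of Ricci curvature automatically forces the first eigenvalue of $L$ to be strictly negative, so $\mathrm{index}(\Sigma) \geq 1$. To obtain index exactly $1$ and nullity $0$, I would use the ambient symmetry group to decompose the eigenspaces of $L$ under the action of $\iota$ and of any further isometries preserving $\Sigma$, and derive a Rayleigh-quotient estimate showing the second eigenvalue of $L$ is strictly positive. The sharp bound $g \leq 3$ under index $1$ recalled in the introduction indicates that the desired configuration is extremal, so the spectral estimates must be fine rather than rough.

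The principal obstacle is balancing these three requirements against one another. Stronger positive sectional curvature inflates $\mathrm{Ric}(\nu,\nu)$ and tends to push the index above $1$, while weaker positive curvature risks destroying strict positivity or causing $\Sigma$ to bifurcate into a nearby family of minimal surfaces (raising the nullity). Engineering the perturbation so that the sectional curvature stays uniformly positive, the first eigenvalue of $L$ remains strictly negative, and the second stays strictly positive is the delicate technical step. The nullity-zero condition is open and generic in the space of metrics, so it should persist under a final small perturbation; verifying that this last perturbation preserves positive sectional curvature, the involution symmetry, and the index count simultaneously is where I expect the heaviest work to lie.
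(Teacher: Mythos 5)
Your proposal fails at its foundation. You propose to realize $\Sigma$ as a two-dimensional component of the fixed set of an isometric involution, hence as a totally geodesic surface, in a metric of positive sectional curvature. But for a totally geodesic surface the Gauss equation gives $K_\Sigma = K_M(T_p\Sigma)>0$ at every point, and Gauss--Bonnet then forces $\chi(\Sigma)>0$; a closed orientable surface of genus $3$ (indeed any surface of genus $\geq 1$) can never be totally geodesic in a manifold of positive, or even nonnegative, sectional curvature. So steps (i) and (ii) cannot be carried out simultaneously: no equivariant deformation of the round metric, however careful, can make a genus $5$ surface in $S^3$ (or its genus $3$ quotient in $RP^3$) totally geodesic while keeping the curvature positive. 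Since fixed-point sets of isometries are always totally geodesic, the whole mechanism of ``forcing minimality by an involution'' is unavailable here; the surface you want must be genuinely minimal with nonvanishing second fundamental form, and its negative intrinsic curvature must be supplied by $|A|^2$ rather than by the ambient geometry. Your step (iii) also leans on the totally geodesic reduction $L=\Delta+\mathrm{Ric}(\nu,\nu)$, so the spectral analysis you sketch has no object to apply to.

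The paper proceeds quite differently, and the contrast is instructive. It works with the singular flat orbifold metric $RP^3(1/2)=T^3(1)/I222$, inside which sits a ``twisted'' quotient of the Schwarz $P$ surface: a genus $3$ embedded minimal surface far from the singular set, whose curvature is carried entirely by $|A|^2$. Index $1$ and nullity $0$ for this surface are deduced from Ross' theorem (index $1$, nullity $3$ for the Schwarz $P$ surface in the torus) combined with symmetry decompositions under $Immm$ and the strict stability of the square catenoid, which kill the three Jacobi fields in the quotient. Positive curvature is then introduced only at the very end: the flat cube is thickened to a convex $C^1$ hypersurface in $\mathbb{R}^4$, smoothed to a strictly convex (hence positively curved) hypersurface agreeing with the flat model to high order near the surface, and antipodal identification gives smooth positively curved metrics on $RP^3$ that are $C^\infty$-close to the flat one near the minimal surface. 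Because the nullity is $0$, the implicit function theorem produces a nearby minimal surface in the perturbed metric, and index $1$, nullity $0$ persist. In short, the tension you identify between strong positive curvature and index $1$ is resolved in the paper by making the curvature positive but arbitrarily small near the surface, not by symmetry-forcing minimality.
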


 As a consequence of the property of ${RP}^3$ and the result of \cite{viana},
the question above about the behaviour of index one minimal surfaces could be stated as follows:

\begin{problem}
\label{problem}
{\it Which compact orientable three-manifolds $M$ admit a Riemannian metric $g$ with positive Ricci curvature and a closed two-sided surface
$\Sigma\subset M$ of genus $3$ such that $\Sigma$ is an index one minimal surface in $(M,g)$? Are there only finitely many of them?
}
\end{problem}
 

We outline the proof of the theorem.
Consider the {\it Euclidean projective space} $RP^3(1/2)$ given as a compact $3$-manifold
diffeomorphic to usual projective space \mbox{${RP}^3\simeq S^3/\pm$} with a singular flat Riemannian metric.
The singular set is a $1$-net with six edges and four vertices, Figure \ref{dunbar}.
It is the quotient space of the cubic \mbox{$3$-torus} $T^3(1)$ under the action of a finite
group of rigid motions, see \cite{dunbar} and Section \ref{I222} below.
By using International Crystallographic Notation \cite{bilbao}, the group is named $I222$ and
it is generated by the $2$-fold screw motions around the principal axes of the cube of side $1/2$ that apply
a perpendicular face into the opposite one,
\[
RP^3(1/2)=T^3(1)/I222.
\]

The proof of Theorem \ref{teor0} depends on the fact that the flat projective space can be approximated by smooth
Riemannian metrics on ${RP}^3$ with positive sectional curvature in terms of the Gromov-Hausdorff distance, the
approximation being in the $C^\infty$ topology outside of any neighborhood of the singular net.

\vspace{.2cm}

In a second step we construct an index one minimal surface in the flat projective space.
To do that we start with the Schwarz $P$ minimal surface, Figure \ref{Schwarz1}.
Marty Ross \cite{ross} showed that this surface has index one in the flat $3$-torus
and that result plays an important role in the history of the second variation of
area for minimal surfaces.
If we think of $RP^3(1/2)$ as a twisted kind of the flat $3$-torus,
then the surface we use is a twisted version of the Schwarz $P$ minimal surface.
By combining Ross' theorem and the symmetry properties of Schwarz' surface, we
show that the new surface has index $1$ and nullity $0$.


The above ingredients  allow us to prove Theorem \ref{teor0} 
by applying an implicit function theorem argument.

 \vspace{.2cm}

\begin{remark}
Note that the Riemannian metrics of Theorem \ref{teor0} admit a second closed minimal surface of index
$1$ and genus $1$ (equal to the Heegaard genus of $RP^3$), \cite{ketovermarquesneves}.
\end{remark}


\section{Preliminaries}

 
Let $\Sigma$ be an orientable closed minimal surface immersed in an orientable Riemannian \mbox{3-manifold} $M$.
We denote by $I\hspace{-.05cm}som(\Sigma)$ the group of transformations on
$\Sigma$ that come from isometries of the ambient space $M$.

The second variation formula of area is given by the {\it index form} $Q(-,-)$ associated to the
{\it Jacobi operator} $L=\Delta + Ric(N)+|A|^2$, where $\Delta$ is the Laplacian on $\Sigma$,
$Ric(N)$ is the Ricci curvature of $M$ along the unit normal vector $N$ of the surface and $|A|^2$ is the square of the
norm of the second fundamental $A$ of the immersion.
If $\varphi\in C^2(\Sigma)$ is an smooth function, then

\[
Q(\varphi,\varphi)=\int_\Sigma |\nabla\varphi|^2-\left(Ric(N)+|A|^2\right)\varphi^2 =
-\int_\Sigma \varphi L \varphi \ dA.
\]

The eigenvalues of $L$ are noted as $\lambda_0 < \lambda_1\cdots < \lambda_k\rightarrow \infty$, each one with 
finite multiplicity $m_k$.
The eigenspace $V_k(\Sigma)$ is the $m_k$-dimensional space of $\lambda_k$-eigenfunctions given
by the functions $u$ such that
\[
L u +\lambda_k u =0 \hspace{.2cm} on \hspace{.2cm}\Sigma.
\]

The eigenvalue $\lambda_0$ is of multiplicity one and the eigenspace is
generated by a positive function $\varphi_0$.
In particular, $\varphi_0$ is invariant under any isometry
in $I\hspace{-.05cm}som(\Sigma)$.
The {\it Jacobi functions} are the eigenfunctions with an eigenvalue equal to $0$
(if any) and the {\it nullity} of $\Sigma$ is the multiplicity of the $0$-eigenvalue.
The immersion is said to be {\it stable} when $\lambda_0\geq 0$ and the {\it index} of $\Sigma$ is the
number of negative eigenvalues (counted with multiplicities).

 
We will consider embedded non totally
geodesic two-sided minimal surfaces $\Sigma\subset M$
in a compact orientable ambient space with nonnegative Ricci curvature.
From the second variation formula we get $\lambda_0<0$ and so, $\Sigma$ is unstable.
We focus on the {\it index one} case (or equivalently $\lambda_1\geq 0$).

\vspace{.2cm}
 
We will also consider the second variation formula for fixed boundary.
Let $D\subset \Sigma$ be a domain with piecewise smooth boundary $\partial D$.
The eigenvalues of $L$ will be denoted as
$\mu_1 < \mu_2 < \cdots$ and the corresponding eigenfunctions satisfy the equation
\[
Lv+\mu_k v =0 \, \, \, {\rm on} \, \, \, D \, \, \, \, \, {\rm and }\, \, \,\, \, v=0 \, \, \, {\rm on} \, \, \,
\partial D.
\]

The first eigenfunction $\varphi_1$ is positive and if $\mu_1 >0$ then we say that $D$ is
{\it strictly stable (for the fixed boundary problem)}.

\vspace{.2cm}

Let $M$ be a compact orientable $3$-manifold and $\Sigma\subset M$ a closed two-sided surface.
The surface is said to be a {\it Heegaard surface} if it separates $M$ in two handle bodies
$M\hspace{-.1cm}-\hspace{-.1cm}\Sigma=\Omega \cup\Omega'$.
The {\it Heegaard genus} of $M$ is the minimal genus of its Heegaard surfaces.
We say that the Heegaard surface is {\it strongly irreducible} if any two proper essential
discs $\triangle\subset \Omega$ and $\triangle'\subset \Omega'$ at different sides of $\Sigma$
have nondisjoint boundaries.
If $M$ has positive Ricci curvature
$Ric>0$ and $\Sigma$ is a minimal surface, then Lawson \cite{lawson} proved that $\Sigma$ is a Heegaard
surface. The same result holds for non totally geodesic minimal surfaces in a flat $3$-manifold, Meeks \cite{meeks}.


\section{The crystallographic groups $I222$ and $Immm$}
\label{I222}

We consider the solid $3$-cube of side $a>0$
\[
C(a)= \{(x,y,z)\in \mathbb{R}^3\, / \, 0\leq x,y,z \leq a \}.
\] 
Its principal axes are the straight lines passing through the center of the cube $(a/2, a/2, a/2)$ 
and parallel to the coordinate axes. If we identify opposite faces of $C(a)$ by the translations
\[
(x,y,0) \mapsto (x,y,a) \hspace{1cm} (x,0,z) \mapsto (x,a,z)
\hspace{1cm} (0,y,z) \mapsto (a,y,z)
\]
we obtain the {\it cubic} $3$-torus $T^3(a)$ with the flat Riemannian metric. 
Equivalently, the flat torus is obtained as the quotient of the Euclidean $3$-space 
by the lattice $\Gamma_a$ generated by the vectors $(a,0,0), (0,a,0)$ and $(0,0,a)$, 
$
T^3(a)=\mathbb{R}^3/\Gamma_a
$ 
and $C(a)$ 
 is a fundamental region of the torus.

\vspace{.2cm}

We will consider in particular the cases $a=1$ and $a=1/2$.

We have the $8\hspace{-.1cm}:\hspace{-.15cm}1$ covering map
\[
\Pi:T^3(1)\longrightarrow T^3(1/2).
\]
 
\vspace{.2cm}

We will also consider the tiling of $T^3(1)$ by eight cubes of edge $1/2$,
\begin{equation}
T^3(1)=\bigcup_{\alpha_i=0,1/2} C(1/2)+(\alpha_1,\alpha_2,\alpha_3). 
\label{tiling21}
\end{equation}
In this case $C(1/2)$ is a fundamental region of that tiling, see Figure \ref{fig-tiling21} (a).
\begin{figure}[h]
\begin{center}
\includegraphics[width=7cm]{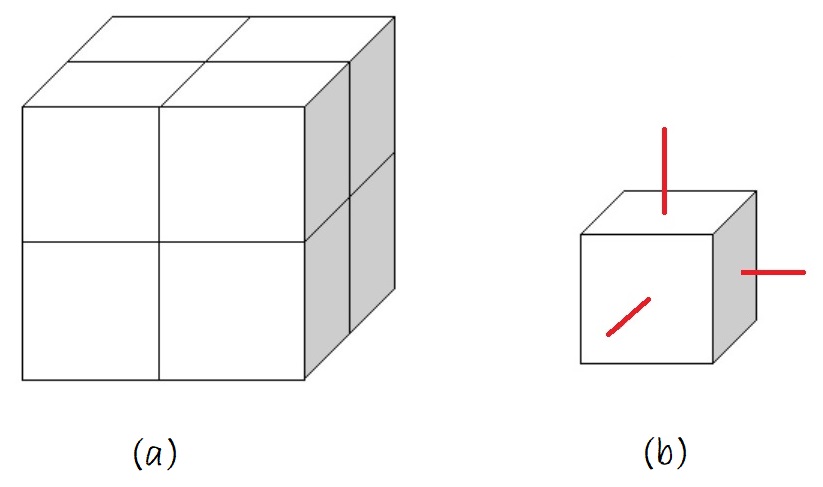}
\caption{}
\label{fig-tiling21}
\end{center}
\end{figure}

\vspace{.5cm}

 Throughout this paper, by a {\it crystallographic group} $\mathcal G$ we mean a finite subgroup of 
the isometry group of the cubic $3$-torus $T^3(1)$, ${\mathcal G} \subset Isom(T^3(1))$, 
see the Bilbao Crystallographic Server \cite{bilbao}.
Following \cite{bilbao}, to describe rigid motions of  $T^3(1)$ we will simply write the 
image of a general point $(x,y,z)$ of $\mathbb{R}^3$, the quotient by the lattice must
be understood. The torus $T^3(1)$ is the quotient by the translations 
$(x+1,y,z),(x,y+1,z),(x,y,z+1)$.

\vspace{.2cm}

Let us consider the three order-$2$ screw motions whose screw axes are the principal axes 
of $C(1/2)$, one of which transforms the face $x=0$ into the opposite one $x=1/2$, and 
the same for the two other directions, Figure \ref{fig-tiling21} (b),\begin{equation}
\left\{
\begin{array}{l}
(-x+1/2,-y+1/2,z+1/2) \\
(-x+1/2,y+1/2,-z+1/2) \\
(x+1/2,-y+1/2,-z+1/2).
\end{array}
\right.
\label{21-screw}
\end{equation} 

The screw-motions (\ref{21-screw}) generate the group $I222$ of order $8$ which, viewed as a transformation group on $T^3(1)$, 
is given by
\begin{equation}
I222 =
\left\{
\begin{array}{l}
(0,0,0)\hspace{-.05cm}+ \hspace{.3cm} (1/2,1/2,1/2) + \vspace{.1cm}\\
(x,y,z) \hspace{.35cm}	(-x,-y,z) \hspace{.35cm} (-x,y,-z) \hspace{.38cm}  (x,-y,-z).
\end{array}
\right.
\label{i222}
\end{equation}

The rigid motions of the group consists of the transformations in the second row plus any 
of the translations given in the first row. Its elements are as follows

\begin{itemize}
  \item[$\circ$] the identity map,
  \item[$\circ$] the three screw motions given by equation (\ref{21-screw}),
  \item[$\circ$] three axial symmetries around  the edges of $C(1/2)$ (composition of two screw motions). 
  Each one of these symmetries has four parallel axes in the torus $T^3(1)$.
  \item[$\circ$] The central translation $(x+1/2,y+1/2,z+1/2)$ (composition of the three screw motions).
\end{itemize}

\vspace{.2cm}

If we add the mirror symmetry $(x,y,-z)$ to the screw motions (\ref{21-screw}), then the generated crystallographic group $Immm$  
is given by the sixteen transformations of $T^3(1)$, see  \cite{bilbao},
\begin{equation}
Immm\, = \,
\left\{
\hspace{-.1cm}
\begin{array}{llll}
{\color{white}.} (0,0,0)+ & (1/2,1/2,1/2)+
\\
(\pm x, \pm y, \pm z).
\end{array}
\right.
\label{immm}
\end{equation} 
The group $Immm$
is formed by eight orientation preserving motions $Immm^+=I222$ and eight orientation reversing ones, 
$Immm^-$. So $I222$ is a subgroup of $Immm$, the quotient is the cyclic group of order two $Immm/I222=\mathbb{Z}_2$ 
and $Immm^-$ is a coset of that quotient.

\vspace{.2cm}

The rigid motions of $Immm^-$ are described as follows:
\begin{itemize}
  \item[$\circ$] Two central symmetries:
  \begin{itemize}
 \item[$\cdot$] one centered at the eight vertices of the cube $C(1/2)\subset T^3(1)$,
\item[$\cdot$] and the other centered at the middle points of the eight cubes of the 
 tiling (\ref{tiling21}). 
\end{itemize}
 \item[$\circ$] Three planar reflections at the faces of the cube $C(1/2)$.
The mirror planes of each one of these reflections are given by
\begin{itemize}
  \item[$\cdot$]  $x=0$ and $x=1/2$, 
  \item[$\cdot$] $y=0$ and $y=1/2$ and 
  \item[$\cdot$] $z=0$ and $z=1/2$.  
\end{itemize} 
\item[$\circ$] Three glide reflections (no fixed points).
\end{itemize}
Note that the reflections $(-x,y,z)$ and $(1-x,y,z)$ are different in $\mathbb{R}^3$ but they coincide
 if viewed as transformations of the $3$-torus $T^3(1)$.

\vspace{.2cm}

Another system of generators of $Immm$ is given by the following rigid motions of $\mathbb{R}^3$: 
\begin{equation}
\label{genera1}
\left\{
\begin{array}{l}
\mbox{six mirror symmetries with respect to the faces of $C(1/2)$ }
\\
\mbox{and the central symmetry with respect to its center.}
 \end{array}
\right.
\end{equation}


\subsection{The Euclidean projective space}
If the crystallographic group $\mathcal G\subset Isom(T^3(1))$ consists of orientation preserving rigid motions, then the map 
$T^3(1) \longrightarrow T(1)^3/\mathcal{G}$ is called an (orientable) 
{\it Euclidean orbifold.} The quotient space $T(1)^3/\mathcal{G}$ admits the structure of a closed orientable \mbox{$3$-manifold}, 
called the {\it underlying space}, endowed with a flat riemannian metric with singularities. 
The {\it singular set} is the $1$-cycle given by the image of fixed points of elements 
of $\mathcal G-\{Identity\}$ and the quotient map $T^3\longrightarrow T^3(1)/{\mathcal G}$ is a branched riemannian covering. 
For a presentation of Euclidean orbifolds see Dunbar \cite{dunbar}.
  
\vspace{.2cm}

The  real projective space $RP^3$ is the manifold obtained by identifying antipodal points of 
the boundary sphere of the Euclidean $3$-ball. Following a related point of view, let us 
introduce the {\it Euclidean projective space} $RP^3(1/2)$  the (Alexandrov) metric space 
obtained from the region $C(1/2)$ by identifying each face of the cube with the opposite one 
by using the equivalent relation 
\begin{equation}
\sim \hspace{.2cm}=
\left\{
\begin{array}{c}
(x,y,0) \sim (1/2-x,1/2-y,1/2) \vspace{.1cm}\\
(x,0,z) \sim (1/2-x,1/2,1/2-z) \vspace{.1cm}\\ 
(0,y,z) \sim (1/2,1/2-y,1/2-z)
\end{array}
\right.
\hspace{1cm}
RP^3(1/2)=C(1/2)/{\hspace{-.06cm}\sim},
\label{3-screw}
\end{equation}
see Figure \ref{fig-tiling21} (b). From (\ref{21-screw}) and (\ref{3-screw}) we have that 
the Euclidean projective space is the {\it orbifold} space given as the quotient 
\begin{equation}
RP^3(1/2)= T^3(1)/I222, \hspace{1cm}  \Pi^*:T^3(1)\longrightarrow RP^3(1/2). 
\label{Pi}
\end{equation}
The projection map $\Pi^*$ is $8$ to $1$ and $RP^3(1/2)$ is a flat riemannian $3$-manifold with singularities. 
The singular set is the image by $\Pi^*$ of the edges of the cube $C(1/2)\subset T^3(1)$.

\begin{figure}[h]
\begin{center}
\includegraphics[width=4cm]{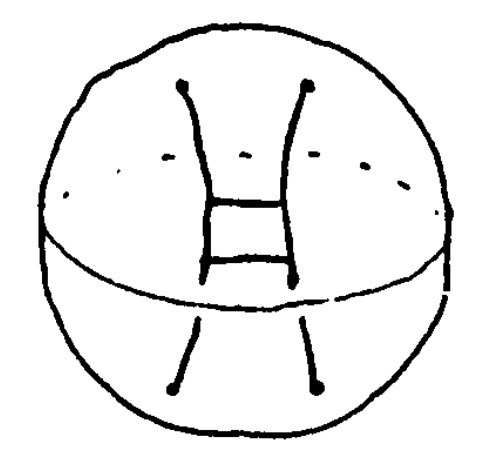}
\caption{ Picture of the Euclidean orbifold $T^3(1)/I222$ appearing in the PhD thesis of Dunbar 
\cite{dunbar}, p 51. The underlying space is the projective space given as an Euclidean $3$-ball with antipodal points identified. 
The $1$-cycle inside the ball represents the singular set of the projective manifold.}
\label{dunbar}
\end{center}
\end{figure}

\vspace{.2cm}

\section{The Schwarz' $P$ minimal surface}
\label{twisted1}

\begin{figure}[h]
\begin{center}
\includegraphics[width=6cm]{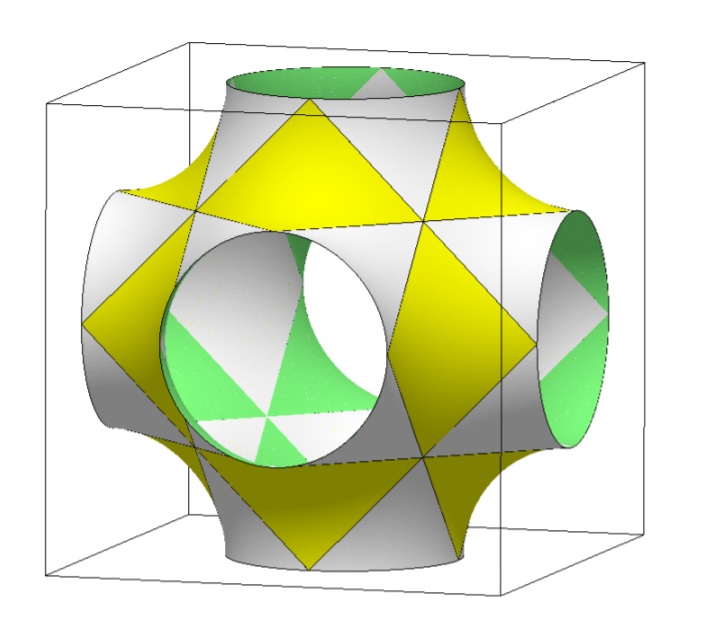}
\caption{The fundamental piece $S$ of Schwarz' $P$ minimal surface in $C(1/2)$ has genus $0$ and 
is orthogonal to the boundary of the cube along six congruent convex curves. 
So, it is a free boundary minimal surface in the cube and identifying the opposite faces
we obtain the Schwarz P minimal surface in the 3-torus $\Sigma\subset T^3(1/2)$.}
\label{Schwarz1}
\end{center}
\end{figure}
The {\it Schwarz' $P$ minimal surface} $\Sigma\subset T^3(1/2)$ is a closed orientable minimal surface of genus $3$  embedded in 
the cubic $3$-torus, see Figure \ref{Schwarz1}, which consists of $24$ congruent skew minimal quadrilaterals. It  contains $12$ closed 
straight lines parallel to the plane coordinates forming a net with $8$ triple vertices and $12$ double ones.  

\vspace{.2cm}

The Gauss map $N:\Sigma\longrightarrow S^2(1)$ is a conformal map of degree $2$ whose branch 
values are the eight vertices of the cube inscribed in the unit sphere  $a(\pm 1, \pm 1, \pm 1)$ with $a=1/{\sqrt{3}}$. 
In particular, $\Sigma$ is a hyperelliptic Riemann surface. 

\vspace{.2cm}

The isometry group of the $P$ surface is $I\hspace{-.05cm}som(\Sigma)=Im\overline{3}m$ of order $96$, and it 
is obtained by composition of the following rigid motions: 

\begin{itemize}
\item[$a)$] the translations 
$(1/2,0,0),(0,1/2,0),(0,0,1/2)$ generating the simple cubic lattice, 
\item[$b)$] the $48$ isometries of the group of the cube $m\overline{3}m$ and
\item[$c)$] the body centered translation $(x+1/4,y+1/4,z+1/4)$.
\end{itemize}

The subgroup of {\it side preserving} isometries of $\Sigma$,  i.e. that preserve the 
unit normal vector $N$ or, equivalently, the components of  $T^3(1/2)-\Sigma$,  
is $I\hspace{-.05cm}som(\Sigma)^+=Pm\overline{3}m$ of order $48$ and is generated by the transformations in $a)$ and $b)$.

The body centered translation $c)$ interchanges these components and belongs to the class of side reversing isometries 
$I\hspace{-.05cm}som(\Sigma)^-$. The groups under consideration verify the relationship
\begin{equation}
I222\subset Immm\subset Pm\overline{3}m\subset Im\overline{3}m.
\label{grupos}
\end{equation}

\vspace{.3cm}

 Ross showed the following interesting property of the Schwarz $P$ minimal surface.

\begin{theorem}[Ross \cite{ross}]  
 The  Schwarz' $P$ minimal surface $\Sigma\subset T^3(1/2)$  has index $1$ and \mbox{nullity $3$.}
\label{Psurface-ross} 
\end{theorem}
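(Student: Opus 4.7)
My plan combines three ingredients: Jacobi functions produced by ambient translations (for lower bounds), the conformal structure induced by the degree-$2$ Gauss map $N$ of $\Sigma$ (for the signature on the $\tau$-invariant part), and the large ambient symmetry group $Sym(\Sigma)=Im\overline{3}m$ of order $96$ (needed to handle the $\tau$-anti-invariant part).

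\textbf{Lower bounds and conformal Gauss map.} Each translation vector $e_j\in\mathbb{R}^3$ produces a Jacobi function $u_j=\langle N,e_j\rangle$ on $\Sigma$; the three $u_j$ are linearly independent because $N$ is non-constant, so $\text{nullity}(\Sigma)\geq 3$. Since $\Sigma$ is non-totally-geodesic in a flat $3$-manifold, $\lambda_0<0$ and $\text{index}(\Sigma)\geq 1$. Minimality in a flat ambient gives $N^*g_{S^2}=(|A|^2/2)g_\Sigma$. For $v\in C^\infty(S^2)$ and $u=v\circ N$, conformal invariance of the Dirichlet integral in dimension two together with the area change formula yield
\[
\int_\Sigma|\nabla u|^2\,dA_\Sigma=2\int_{S^2}|\nabla v|^2\,dA_{S^2},\qquad \int_\Sigma|A|^2 u^2\,dA_\Sigma=4\int_{S^2}v^2\,dA_{S^2},
\]
hence $Q(u,u)=2\int_{S^2}(|\nabla v|^2-2v^2)\,dA_{S^2}$. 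The spectrum of $-\Delta_{S^2}-2$ starts with $-2$ (simple), then $0$ of multiplicity three (linear coordinates), then $4,10,\ldots$, all positive. Therefore $Q$ has exactly one negative direction and exactly three null directions on the subspace of $S^2$-pull-backs.

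\textbf{Hyperelliptic splitting.} Because $N$ is a degree-$2$ branched cover, this pull-back subspace is precisely $C^\infty_+(\Sigma)$, the functions invariant under the hyperelliptic involution $\tau:\Sigma\to\Sigma$. For the Schwarz $P$ surface, $\tau$ is realized by an ambient isometry in $Sym(\Sigma)$, so $L$ commutes with $\tau$ and we obtain an $L^2$-orthogonal splitting $C^\infty(\Sigma)=C^\infty_+\oplus C^\infty_-$ on which $L$ is block-diagonal. The preceding paragraph gives $\text{index}=1$ and $\text{nullity}=3$ on the plus-block, and the translation Jacobi functions already lie in $C^\infty_+$ since $N\circ\tau=N$. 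The theorem will follow once we prove
\[
Q(u,u)>0\text{ for every nonzero } u\in C^\infty_-(\Sigma).
\]

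\textbf{Main obstacle: positivity on the $\tau$-anti-invariant part.} This is the technical heart of the argument; the Gauss-map pull-back trick is unavailable on $C^\infty_-$. The plan is to use the remaining $48$ symmetries in $Sym(\Sigma)/\langle\tau\rangle$, in particular the mirror reflections whose fixed loci contain the twelve embedded straight lines of $\Sigma$: decomposing $C^\infty_-$ into isotypical components under this group reduces the question to fixed-boundary eigenvalue problems for $L$ on the small fundamental subdomains obtained by cutting $\Sigma$ along these geodesic lines (pieces of the twenty-four congruent skew minimal quadrilaterals). Strict stability of each such subdomain can be verified by exhibiting on it a strictly positive Jacobi function with strictly positive boundary values, for example the restriction of a suitable coordinate $u_j=\langle N,e_j\rangle$ to a piece where it does not change sign; integration by parts against a first Dirichlet eigenfunction then forces $\mu_1>0$ for $L$ on that piece. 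Combining these contributions yields $Q|_{C^\infty_-}>0$, whence $\text{index}(\Sigma)=1$ and $\text{nullity}(\Sigma)=3$.
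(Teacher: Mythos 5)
On the point of comparison: the paper does not prove this statement at all. It is quoted as Ross's theorem (\cite{ross}), and the only related in-paper argument, Corollary \ref{corollarysquarecatenoid} on the strict stability of the Square Catenoid, is \emph{deduced from} this theorem, so it cannot be invoked to complete your argument without circularity. Your proposal is therefore an attempt to reprove Ross's result. Its first two steps are correct and standard: the three functions $\langle N,e_j\rangle$ give nullity at least $3$ and index at least $1$, and the conformal pull-back computation through the degree-$2$ Gauss map (with $N^*g_{S^2}=\tfrac{|A|^2}{2}g_\Sigma$) shows that on the $\tau$-invariant subspace the index form has exactly one negative and three null directions. (Two points you use without justification, both true for the $P$ surface but needing an argument: that the hyperelliptic involution $\tau$ is induced by an ambient isometry with $N\circ\tau=N$, and that $\tau$-invariant eigenfunctions descend to admissible test functions on $S^2$ despite the eight branch points.)

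The genuine gap is the third step, which is exactly the hard content of Ross's theorem and is only a plan, and as stated the plan does not work. A function $u\in C^\infty_-(\Sigma)$ is anti-invariant under $\tau$ alone, so it is forced to vanish only at the eight fixed points of $\tau$, not along any curve; to cut $\Sigma$ along the twelve straight lines and invoke Dirichlet problems on the skew quadrilaterals you need $u$ to vanish on \emph{all} the lines, and this holds only for the isotypical components of $Sym(\Sigma)$ on which every axial $\pi$-rotation (or reflection) fixing a line acts by $-1$. On the remaining components of $C^\infty_-$ the functions do not vanish on all the lines, so the decomposition leaves Neumann or mixed conditions on part of the boundary of each piece, and strict \emph{Dirichlet} stability of the small quadrilaterals says nothing there (Dirichlet eigenvalues only decrease when pieces are enlarged, so one also cannot pass to the bigger pieces on which the function does vanish on the boundary). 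You would need to analyze the group structure (e.g.\ whether $\tau$ is generated by the line rotations, which characters actually occur in $C^\infty_-$) and prove stability for the resulting mixed boundary problems; none of this is done. In addition, the claimed strict stability of each quadrilateral rests on finding a coordinate $\langle N,e_j\rangle$ of fixed sign on that piece, which is asserted rather than checked and is delicate: for instance $\langle N,e_3\rangle$ changes sign across the waist region of the neck pieces, so the choice of coordinate and piece must be made and verified case by case. Until the $\tau$-anti-invariant part is genuinely controlled, neither the index bound $\leq 1$ nor the nullity bound $\leq 3$ is established.
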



In particular, the only Jacobi functions on $\Sigma$ are the linear functions of its Gauss map. 
 Some other classical periodic minimal surfaces have index one too, see Ejiri and Shoda 
 \cite{ejirishoda}. The known proofs depend (at some moment) of explicit calculations. For more about 
 index one minimal surfaces in flat $3$-manifolds see \cite{ritoreros1,ritore,ros1}.

\vspace{.5cm}

\subsection{The Square Catenoid.}

The planar 2-tori $z=1/8$ and $z=3/8$ divide $T^3(1/2)$ in two connected components. The part of $\Sigma$ in each one of 
these components is called the {\it Square Catenoid}. It is an annulus bounded by a pair of squares lying in the above horizontal
 planes and at intermediate heights the horizontal sections are convex curves, see Figure \ref{XsquarecatenoidX},
\[
  Cat=\Sigma\cap\{1/8\leq z\leq 3/8\} ,\hspace{1cm} Cat'=\Sigma\cap\{3/8\leq z\leq 5/8\}. 
\]
 Moreover, both annuli are congruent and the centered traslation by the vector $(1/4,1/4,1/4)$
transforms one into the other,
\[
Cat + (1/4,1/4,1/4)=Cat' \hspace{.6cm} and \hspace{.6cm} Cat \cup Cat' = \Sigma.
\]

\begin{figure}[h]
\begin{center}
\includegraphics[width=4cm]{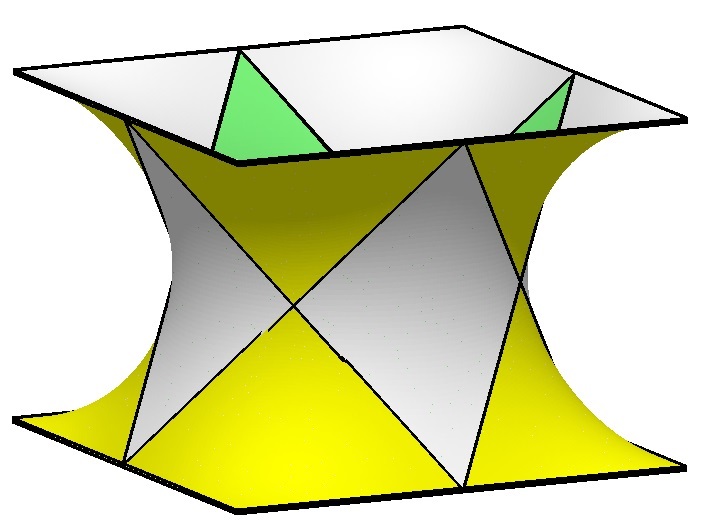}
\caption{The Square Catenoid is a minimal annulus bounded by two parallel squares of side $\sqrt{2}/4$ which differ by the 
vertical translation $(0,0,1/4)$. It decomposes into twelve congruent minimal quadrilaterals and the Schwarz' $P$ surface is 
the union of two square catenoids.}
\label{XsquarecatenoidX}
\end{center}
\end{figure}

\vspace{.1cm}

Although we will not give a proof of Theorem \ref{Psurface-ross}, we show the following consequence, which in fact takes 
part of the inside arguments of \cite{ross} and is equivalent to the theorem itself.
 
\vspace{.3cm}

\begin{corollary} The Square Catenoid is strictly stable (for the fixed boundary problem).
\label{corollarysquarecatenoid}
\end{corollary}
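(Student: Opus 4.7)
The plan is to exploit the free isometric involution
$\tau:(x,y,z)\mapsto (x+\tfrac14,y+\tfrac14,z+\tfrac14)$ of $T^3(1/2)$, which belongs to
$Sym(\Sigma)$ (it is the body--centred translation singled out in the description of $Sym(\Sigma)=Im\overline{3}m$)
and satisfies $\tau(Cat)=Cat'$. Since
$(\tfrac12,\tfrac12,\tfrac12)\in\Gamma_{1/2}$ but
$(\tfrac14,\tfrac14,\tfrac14)\notin\Gamma_{1/2}$, the restriction $\tau|_\Sigma$ is a fixed-point-free involution swapping $Cat$ with $Cat'$. The idea is to split the index form on $\Sigma$
into $\tau^*$--invariant and anti--invariant pieces and show that the anti--invariant piece is strictly positive; the corollary will then follow from a Rayleigh--quotient computation on a cleverly chosen test function.

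Write $H^1(\Sigma)=H^1_+\oplus H^1_-$ with $H^1_\pm:=\{u: \tau^*u=\pm u\}$. The Jacobi operator
$L$ preserves this splitting because $\tau$ is an isometry. The first eigenfunction
$\varphi_0>0$ of $\Sigma$ is $\tau$--invariant by uniqueness, so $\varphi_0\in H^1_+$; and since
$\tau$ is a pure translation its differential is the identity, so $N_i\circ\tau=N_i$ for the
three coordinates of the Gauss map. By Theorem~\ref{Psurface-ross} the nullity space of $L$
equals $\mathrm{span}(N_1,N_2,N_3)$ and therefore also lies in $H^1_+$. Combined with the fact
that $\Sigma$ has index $1$, this shows that $L|_{H^1_-}$ has no non--positive eigenvalue, so
$Q(u,u)>0$ for every nonzero $u\in H^1_-$.

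Assume now, for contradiction, that $\mu_1(Cat)\le 0$ and let $v$ be a corresponding first
Dirichlet eigenfunction on $Cat$, normalised so that $v>0$ in the interior. Set
$v':=v\circ\tau^{-1}$ on $Cat'$ and define
\[
u_-:=\begin{cases} v & \text{on }Cat,\\ -v' & \text{on }Cat'.\end{cases}
\]
Since $v$ and $v'$ both vanish on the common boundary $\partial Cat=\partial Cat'$, $u_-$ is a
nonzero continuous $H^1$--function on $\Sigma$; and $\tau^2=\mathrm{Id}$ together with
$\tau(Cat)=Cat'$ gives $\tau^*u_-=-u_-$, so $u_-\in H^1_-$. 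Splitting $Q(u_-,u_-)$ as the sum
of its contributions on $Cat$ and $Cat'$ and integrating by parts on each half (the Dirichlet
condition $v|_{\partial Cat}=0$ kills the boundary terms) yields
\[
Q(u_-,u_-)=Q_{Cat}(v,v)+Q_{Cat'}(v',v')=2\mu_1\,\|v\|_{L^2(Cat)}^2\le 0,
\]
contradicting the positivity of $Q$ on $H^1_-$. Hence $\mu_1(Cat)>0$, which is strict stability.

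The main obstacle is confining the null space of $\Sigma$ to $H^1_+$: this is where
Theorem~\ref{Psurface-ross}'s sharp identification of the nullity as
$\mathrm{span}(N_1,N_2,N_3)$ is indispensable, since it is precisely the fact that $\tau$ acts
trivially on the Gauss map (being a pure translation) that forces the null eigenspace into
$H^1_+$. Without this step one could only deduce $\mu_1(Cat)\ge 0$ from the index--$1$
condition, and ruling out the borderline case $\mu_1(Cat)=0$ would require a substantially more
delicate argument.
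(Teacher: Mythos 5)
There is a genuine gap, and it sits exactly at the step you yourself flag as indispensable. You claim that because $\tau(x,y,z)=(x+\tfrac14,y+\tfrac14,z+\tfrac14)$ is a pure translation with $d\tau=\mathrm{Id}$, the coordinates of the Gauss map satisfy $N_i\circ\tau=N_i$, so the nullity space $\mathrm{span}(N_1,N_2,N_3)$ lies in $H^1_+$. This is false. The identity $d\tau=\mathrm{Id}$ only tells you that the tangent plane at $\tau(p)$ is parallel to the one at $p$, i.e.\ $N(\tau(p))=\pm N(p)$; the sign is determined by whether $\tau$ preserves or exchanges the two components of $T^3(1/2)-\Sigma$. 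As the paper notes explicitly, the body-centred translation \emph{interchanges} the two complementary labyrinths of the Schwarz $P$ surface, so the chosen unit normal pulls back to its negative: $N\circ\tau=-N$. Hence the three Jacobi functions $N_1,N_2,N_3$ are \emph{anti}-invariant and lie in $H^1_-$. Consequently $Q$ is only nonnegative on $H^1_-$ (indeed $Q(N_i,N_i)=0$ with $N_i\in H^1_-$), and your final computation $Q(u_-,u_-)=2\mu_1\|v\|^2\le 0$ produces no contradiction when $\mu_1=0$: it is compatible with $Q\ge 0$ on $H^1_-$. So your argument, as written, proves only $\mu_1(Cat)\ge 0$, which you correctly identify as insufficient.

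The repair is essentially the paper's proof. In the borderline case $\mu_1=0$ your test function $u_-$ satisfies $Q(u_-,u_-)=0$ and is $L^2$-orthogonal to $\varphi_0$ (automatic, since $\varphi_0\in H^1_+$ and $u_-\in H^1_-$), and since $\lambda_1(\Sigma)=0$ this forces $u_-$ to be a genuine Jacobi function on $\Sigma$. By Ross's nullity-$3$ statement $u_-$ is a linear function of the Gauss map, but it vanishes on the boundary squares $\partial Cat$, where the Gauss map takes values that are not contained in any great circle, so $u_-\equiv 0$, contradicting $v>0$ in the interior of $Cat$. Note that this is exactly the mechanism of the paper's argument (there with the two functions $v$, $v'$ extended by zero rather than your odd reflection); the $\tau$-parity decomposition adds nothing beyond automatically giving orthogonality to $\varphi_0$, and the sharp identification of the kernel is needed not to locate it in $H^1_+$ (it is not there) but to kill the borderline case via the boundary vanishing.
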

\begin{proof}
Let $v:Cat\longrightarrow \mathbb{R}$ be the first eigenfunction of the Square Catenoid 
 \[
Lv+\mu_1 v=0 \, \, on \,\, Cat \hspace{.6cm} and  \hspace{.6cm} v=0 \,\, at \,\, \partial Cat
\]
We know that $v$ is positive in the interior of $Cat$ and if we extend $v$ as $0$ on $Cat'$ we get a nonzero piecewise smooth function 
$v:\Sigma\longrightarrow \mathbb{R}$. If we repeat the same procedure by changing the role of $Cat$ and $Cat'$ we obtain a second 
piecewise function $v'$ on $\Sigma$ with support equal to $Cat'$ such that 
\[ 
\int_\Sigma v v'=0  \hspace{1cm} Q(v,v')=0.
\]
As $v$ and $v'$ are linearly independent, if we denote by $U$ the plane generated by $v$ and $v'$, then for any $u\in U$ we have
\begin{equation}
\label{corolar}
Q(u,u)= \mu_1\int_\Sigma u^2.
\end{equation}

Reasoning by contradiction, assume that $\mu_1\leq 0$. Let us take $u \in U-\{0\}$ orthogonal to the 
$\lambda_0$ - eigenfunction $\varphi_0$ on $\Sigma$, i. e. 
\[
\int_\Sigma u\varphi_0 =0.
\]
As $\Sigma$ has index $1$, from (\ref{corolar}) we conclude that $\mu_1=0$ and so $u$ is a Jacobi function. The nullity of 
$\Sigma$ is equal to $3$ and so it follows that $u$ is linear function of the Gauss map $N$. 
At the boundary of $Cat$ we have $u=0$. This implies that $u$ is identically cero and this contradiction gives $\mu_1>0$. 
Therefore the Square Catenoid is strictly stable. 
\end{proof}

\vspace{.3cm}


\subsection{The Twisted $P$ minimal surface}

Let us start with the covering $\Pi:T^3(1)\longrightarrow T^3(1/2)$ in (\ref{Pi}) and the Schwarz' P minimal surface 
 \mbox{$ \Sigma\subset T^3(1/2)$}.  We consider the pullback image and the restricted 
 $8\hspace{-.1cm}:\hspace{-.12cm}1$ covering map 
\[
\Sigma^{-1}=\Pi^{-1}(\Sigma)\subset T^3(1)
\hspace{1.3cm} \Pi:\Sigma^{-1}\longrightarrow \Sigma.
\]
The closed minimal surface $\Sigma^{-1}$ has genus $17$, see Figure \ref{pdeschwarz22}. 

\vspace{.2cm}

From now on, the transformation groups $I222$ and $Immm$ and the coset \mbox{$Immm^-$}, 
will be viewed as acting either on the unit $3$-torus or on the surface $\Sigma^{-1}\subset T^3(1)$, depending on the case.

\vspace{.2cm}
 
We define the {\it twisted $P$  minimal surface} $\Sigma^*$ in the flat projective space $RP^3(1/2)$ 
by taking the quotient by the group $I222$,
\[
\Sigma^*= \Sigma^{-1}/I222 \hspace{1cm} RP^3(1/2) = T^3(1)/I222 \hspace{1cm}\Sigma^*\subset RP^3(1/2).
\]
The quotient surface $\Sigma^*$ is a compact, orientable embedded minimal surface with $genus(\Sigma^*)=3$
that does not meet the singular set of $RP^3(1/2)$. We will also consider the 
$8\hspace{-.06cm}:\hspace{-.1cm}1$ covering maps 
\[
\Pi^*:T^3(1)\longrightarrow RP^3(1/2)  \hspace{2cm} \Pi^{*}:\Sigma^{-1}\longrightarrow \Sigma^*
\]
with covering transformations group $I222$. Note that $\Pi^*$ is a branched covering between the 
\mbox{$3$-torus} and the real projective space and when restricted to the 
surfaces $\Sigma^{-1}$ and $\Sigma^*$, it defines a (unbranched) covering map.

\vspace{.2cm}

\begin{figure}[h]
\begin{center}
\includegraphics[width=5.5cm]{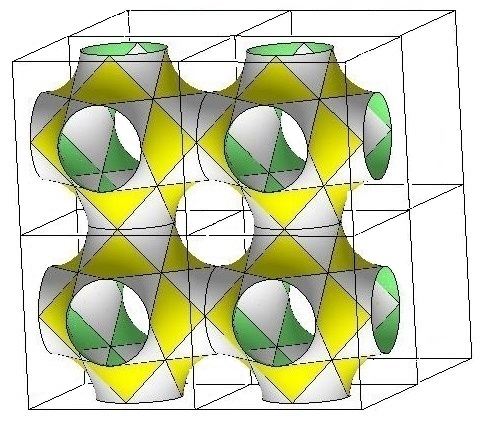}
\caption{The fundamental domain of the surface $\Sigma^{-1}$ on the cube of side $1$ is formed 
by eight copies of the fundamental piece $S$  of the Schwarz surface $\Sigma$.}
\label{pdeschwarz22}
\end{center}
\end{figure}

 \begin{figure}[h]
\begin{center}
\includegraphics[width=4cm]{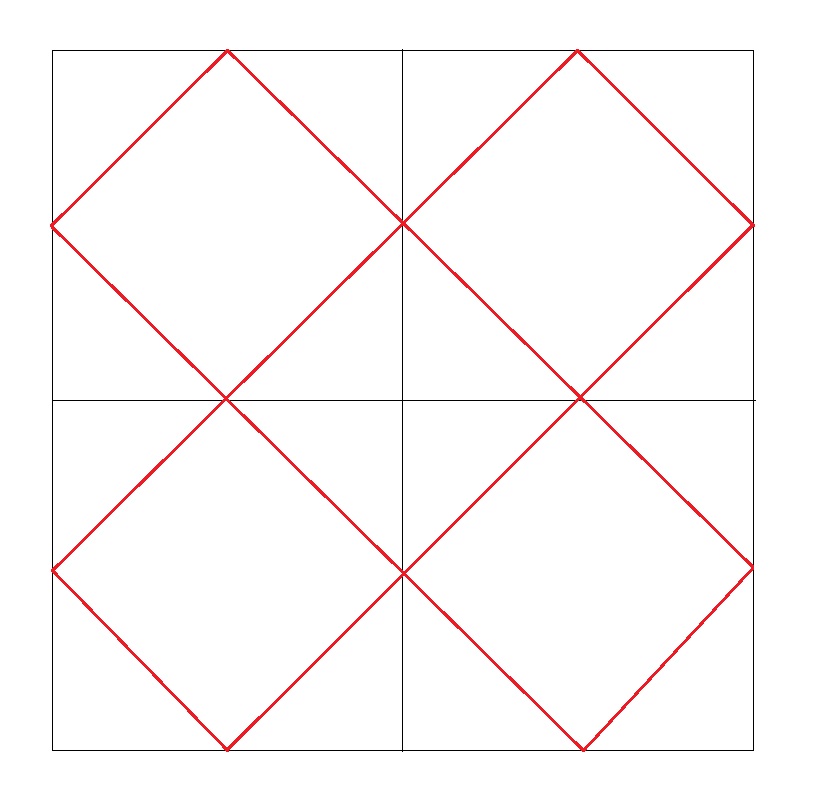}
\caption{For later use it will be of interest to retain the set of straight lines 
in the Schwarz P surface $\Sigma^{-1} \subset T^3(1)$.
Each of these lines is parallel to one of the coordinate planes 
and the figure shows the straight lines (in red) at height $z=1/8$ in the cube $C(1)$.}
\label{rectas}
\end{center}
\end{figure}

Another way of thinking about the twisted surface is as follows.
Identifying the boundary of the cube $C(1/2)$ by the translations vectors
$(1/2,0,0)$, $(0,1/2,0)$,$(0,0,1/2)$, the fundamental piece $S$ gives the closed {\it Schwarz' $P$ minimal surface} 
in the cubic $3$-torus $\Sigma\subset T^3(1/2)$, see Figure \ref{Schwarz1}. 
However, if we identify the opposite faces of the cube by using the screw motions in (\ref{3-screw}) we obtain the 
{\it twisted $P$ minimal surface} of genus $3$ in the flat projective space $\Sigma^*\subset RP^3(1/2)$.

\vspace{.2cm}

In this subsection we will show one of the central results of the paper: The twisted Schwarz P surface has index 1 and 
nullity 0, Theorem \ref{teortwisted}. If the fundamental piece $S$ of the Schwarz P minimal surface in Figure \ref{Schwarz1} 
had index $1$ for the free boundary problem in the cube $C(1/2)$, then Theorem \ref{teortwisted} will follow as a direct 
consequence. But it is known that free boundary minimal surfaces of the cube with six boundary components cannot have 
index $1$, see Theor. 5 in Ros and Vergasta \cite{vergasta}. This illustrates the need for a more elaborate argument.

\vspace{.2cm}

A Continuous function on the twisted P surface $\Sigma^*$ will be thought of as one on the pullback 
surface $\Sigma^{-1}$, $\Pi^*:\Sigma^{-1}\longrightarrow \Sigma^*$, invariant under the group $I222$
\[
u:\Sigma^{-1}\longrightarrow \mathbb{R}, \hspace{.3cm}  {\rm with} \hspace{.2cm} u\circ \phi=u \hspace{.2cm} \forall \phi\in I222.
\]
We say that a function $u$ of this type is {\it odd} (with respect to the pair $I222\subset Immm$) 
if $u\circ \phi = -u$, $\forall \phi\in Immm^-$.
In the case $u$ is invariant under of all the rigid motions of $Immm$, then we say that $u$ is {\it even}.
As $Immm/I222=\mathbb{Z}_2$, then the space of continuous $I222$-invariant functions on $\Sigma^{-1}$ decomposes 
as a direct sum
\[
\{u\in C^0(\Sigma^{-1})\,/ \, u\circ \phi =u, \, \forall \phi\in I222\}=Odd\oplus Even.
\] 

On the minimal surface $\Sigma^*\subset RP^3(1/2)$ we have the Jacobi operator $L=\Delta +|A|^2$, the eigenvalues 
$\lambda_k^*=\lambda_k(\Sigma^*)$ and the eigenspaces $V_k(\Sigma^*)$. In this section we will see these objects over 
$\Sigma^{-1}$. The  eigenspaces $V_k(\Sigma^*)=\{u:\Sigma^* \longrightarrow \mathbb{R}\, / \, Lu+\lambda_k^* u=0\}$ 
will be identified with the space of $\lambda_k^*$-eigenfunctions on $\Sigma^{-1}$ which are invariant under the 
transformations of $I222$,
\begin{equation}
W_k = 
\{u:\Sigma^{-1}\to \mathbb{R} \, /\, Lu + \lambda_k^* u =0, \ 
u\circ \phi = u\ \forall \phi\in I222  \}.
\label{wk}
\end{equation}
The minimal surface $\Sigma^{-1}$ could have other eigenvalues that do not admit $I222$-invariant eigenfunctions. 
These are not considered here. However, the smallest eigenvalue $\lambda_0^*$ of $\Sigma^*$ is also the smallest eigenvalue 
of $\Sigma$ and $\Sigma^{-1}$, $\lambda_0^*=\lambda_0<0$. This is because, in all the cases, the first eigenfunction is invariant 
under all the isometries of the surface. In particular, the $\lambda_0$-eigenspace on $\Sigma^{-1}$ is generated by a positive 
even eigenfunction $\varphi_0:\Sigma^{-1}\longrightarrow\mathbb{R}$. 
Moreover, the decomposition of $W_k$ into $Odd$ and $Even$ functions holds on the eigenspaces $W_k$, for any $k$, too. 

\vspace{.2cm}

From theorem \ref{Psurface-ross} we have $\lambda_1=\lambda_1(\Sigma)=0$. 
Now we will prove that $\lambda_1^*=\lambda_1(\Sigma^*) > 0$. 
\begin{theorem}
The Twisted $P$ minimal surface $\Sigma^*\subset RP^3(1/2)$ has index $1$ and nullity $0$.
\label{teortwisted}
\end{theorem}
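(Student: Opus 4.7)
Since $\lambda_0^*=\lambda_0<0$ with simple eigenspace $\langle\varphi_0\rangle$, the index of $\Sigma^*$ is automatically at least $1$. The plan is to prove $\lambda_1^*>0$ by showing that every $I222$-invariant eigenfunction $u$ on $\Sigma^{-1}$ with $Lu+\lambda u=0$, $\lambda\leq 0$, and $u\perp\varphi_0$ in $L^2(\Sigma^{-1})$ must vanish identically. The decomposition $W_k=Even\oplus Odd$ associated to $Immm/I222\cong\mathbb{Z}_2$ lets me write $u=u_e+u_o$, where both summands are again $\lambda$-eigenfunctions orthogonal to $\varphi_0$, and I dispose of them separately.

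For the odd part, the key observation is that $u_o$ changes sign under each of the three planar reflections contained in $Immm^-$, whose fixed loci are the six planes $\{x,y,z\in\{0,1/2\}\}$. Thus $u_o$ vanishes on $\Sigma^{-1}$ intersected with these planes, and since the planes cut $T^3(1)$ into the eight cubes of tiling~(\ref{tiling21}), they cut $\Sigma^{-1}$ into eight congruent copies of the fundamental piece $F$. On each copy $u_o$ would be a Dirichlet eigenfunction of $L$ with eigenvalue $\lambda\leq 0$. The plan is then to exploit the additional reflection symmetries of $\Sigma^{-1}$ coming from $Sym(\Sigma)=Im\bar{3}m$ but not in $Immm$ (for example the reflections at $\{x,y,z=1/4\}$, which swap the two Square Catenoids of $\Sigma$) to further subdivide $F$ into subregions isometric to pieces of the Square Catenoid. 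Strict stability of the Square Catenoid for the fixed boundary problem, Corollary~\ref{corollarysquarecatenoid}, then forces $u_o\equiv 0$.

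For the even part, $u_e$ is $Immm$-invariant, so in particular invariant under the central symmetry $\sigma(x,y,z)=(-x,-y,-z)\in Immm^-$, which sends $N$ to $-N$ and hence makes every nontrivial linear combination of the Jacobi functions $N_1,N_2,N_3$ of $\Sigma$ $\sigma$-antisymmetric. If $u_e$ is moreover $\Lambda$-invariant (where $\Lambda$ is the deck group of $\Pi\colon\Sigma^{-1}\to\Sigma$) it descends to a $\sigma$-invariant eigenfunction on $\Sigma$ with eigenvalue $\leq 0$ and orthogonal to $\varphi_0$, and Ross's Theorem~\ref{Psurface-ross} forces it to vanish. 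For the part of $u_e$ that is not $\Lambda$-invariant, I expand into the four $\Lambda/(\Lambda\cap Immm)\cong\mathbb{Z}_2^2$ character components; each nontrivial character yields a twisted eigenvalue problem on $\Sigma$ which I expect can be treated, via a doubling argument across the reflection plane on which the twist character changes sign, as a Dirichlet problem on a Square-Catenoid-type region, again ruled out by Corollary~\ref{corollarysquarecatenoid}.

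The hard part will be the geometry of the refinements: in the odd case, identifying explicitly the subregions of $F$ as subsets of Square Catenoids and checking that the boundary conditions inherited from the symmetry constraints match the Dirichlet hypothesis of Corollary~\ref{corollarysquarecatenoid}; and, analogously, setting up the correct doubling across $Immm^-$-reflection planes in the twisted even case. Once both pieces are executed, $u=u_e+u_o=0$ contradicts the initial choice of $u$, so $\lambda_1^*>0$, which combined with $\lambda_0^*<0$ simple shows $\Sigma^*$ has index $1$ and nullity $0$.
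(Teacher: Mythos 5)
Your skeleton (splitting an $I222$-invariant eigenfunction into its even and odd parts under $Immm/I222\cong\mathbb{Z}_2$, then invoking Ross's theorem and the strict stability of the Square Catenoid) matches the paper's strategy, but both of your key steps are left as ``hard parts'' and the mechanisms you sketch for them do not work as stated. For the odd part, after observing that $u_o$ vanishes on $\Sigma^{-1}\cap\partial C(1/2)$ you try to prove, in effect, that the fundamental piece $F$ is strictly stable for the Dirichlet problem by cutting it into Square-Catenoid pieces along the extra mirror planes $\{x,y,z=1/4\}$. Two problems: first, $u_o$ is only assumed $I222$-invariant and $Immm^-$-odd, so it has no reason to be anti-invariant under those additional reflections and hence no reason to vanish on the corresponding curves -- the cut pieces are not Dirichlet problems, and the fully symmetric component of a further symmetry decomposition can never be handled this way; second, the Square Catenoid is bounded by the straight lines at heights $z=1/8,3/8$, which are fixed sets of $180^\circ$ rotations, not of reflections, so mirror-plane cuts do not produce Square-Catenoid regions at all (and, incidentally, the reflections at $\{x,y,z=1/4\}$ preserve each Square Catenoid; it is the body-centered translation $(1/4,1/4,1/4)$ that swaps them). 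The paper avoids all of this: since $u_o$ vanishes on the face curves, its restriction to $\Sigma\cap C(1/2)$ is a continuous test function $\widetilde u$ on the Schwarz surface $\Sigma$ itself, orthogonal to $\varphi_0$, with $Q_\Sigma(\widetilde u,\widetilde u)=\lambda\int u_o^2\leq 0$; index $1$ and nullity $3$ of $\Sigma$ (Theorem \ref{Psurface-ross}) then kill $u_o$ directly, with no stability statement about $F$ needed.

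For the even part your trivial-character case (descent to $\Sigma$) is essentially the paper's argument, but the nontrivial translation characters are exactly where the real work lies, and your ``doubling across a reflection plane onto a Square-Catenoid-type region'' is unsubstantiated: anti-invariance under a half-lattice translation combined with $Immm$-invariance gives vanishing on mirror circles in planes like $x=1/4$, and the resulting pieces are neither the Square Catenoid nor subdomains of it, so Corollary \ref{corollarysquarecatenoid} does not apply to them. The missing idea is to use the axial $180^\circ$ rotations about the straight lines contained in $\Sigma^{-1}$ (the paper's Claim 2 and final step): if an eigenfunction is anti-invariant under one such rotation, $Immm$-invariance forces it to vanish on all the lines at heights $1/8$ and $3/8$, making it a Dirichlet eigenfunction on the Square Catenoid, which is excluded by strict stability; if it is invariant under all of them, then composing the rotations about two parallel lines at distance $1/4$ shows it is invariant under the translations $(1/2,0,0),(0,1/2,0),(0,0,1/2)$, i.e.\ the character is trivial and the function descends to $\Sigma$, where index $1$ (and, for the nullity statement, nullity $3$ together with anti-invariance of the coordinates of $N$ under elements of $I222$) finishes the proof. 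Without this rotation-axis step, or a proof of strict stability of the mirror-cut pieces you actually produce, your argument does not close.
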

\begin{proof}
The first eigenvalue $\lambda_0^*$ is negative and the eigenspace of $W_0$ is generated by a 
positive function $\varphi_0\in W_0$ invariant under all the isometries of the surface $\Sigma^{-1}$. 

\vspace{.1cm}

The space $W_1$ consists of $\lambda_1^*$-eigenfunctions of $\Sigma^{-1}$ invariant under the group $I222$ and
the statement of the theorem means that $\lambda_1^*>0$.

\vspace{.2cm}

Reasoning by contradiction, suppose that $\lambda_1^* \leq 0$ and then we will obtain the two {\it Claims} bellow.

\vspace{.2cm}

Claim 1.  {\it Every function $u\in W_1$ is invariant under the group $Immm$, i.e. $u$ is even.}

\vspace{.2cm}

In fact, if $u\neq 0$ is odd then $u$ vanishes at the fixed points of all the isometries $\phi \in Immm^{\,-}$.  
Therefore $u$ is equal to zero at the six Jordan curves in $\Sigma\cap \partial C(1/2)$, as it are the fixed points 
of the three mirror symmetries of the group $Immm$, see Figure \ref{Schwarz1}.
After identifying opposite sides of the cube, we obtain a continuous piecewise smooth function 
$\widetilde{u}$ on the Schwarz P surface $\Sigma$ defined as $u$ restricted to $\Sigma\cap C(1/2)=\Sigma^*\cap C(1/2)$. 
Moreover, $u$ and $\varphi_0$ are orthogonal on $\Sigma^*$, so we have 
\[
\int_\Sigma \widetilde{u}\varphi_0 = \int_{\Sigma^*} u\varphi_0 = 0, \hspace{1cm} {\rm and}\hspace{1cm}
Q_{\Sigma}(\widetilde{u},\widetilde{u})=Q_{\Sigma^*}(u,u) \leq 0.
\]
From Theorem \ref{Psurface-ross} we conclude that $\lambda_1^*=0$ and ${u}$ is a linear function of the Gauss 
map of the Schwarz P minimal surface. But none of these linear functions is well defined on the twisted P surface $\Sigma^*$ 
and this contradiction proves the claim.

\vspace{.2cm}

The axial symmetries of the group $I222$ are those whose axes are the edges of the cube $C(1/2)$. 
Hence every eigenfunction $u\in W_1$  is invariant under these involutions. 

Now we will show that if $\lambda_1^*\leq 0$ then $u$ is also invariant under other types of axial symmetries: 
Those given by the straight lines contained in the Schwarz P surface, see Figure \ref{Schwarz1}. 

\vspace{.2cm}

Claim 2. {\it The functions in $W_1$ are invariant under the axial symmetries whose axes are contained in $\Sigma^{-1}$.} 


\vspace{.2cm}

To prove claim 2, suppose on the contrary that there is $u \in W_1-\{0\}$ anti-invariant with respect to a horizontal straight line 
$\mathcal L\subset T^3(1)$ contained in $\Sigma\cap\{z=1/8\}$ (a red line in Figure \ref{rectas}). In particular, 
$u$ vanishes along $\mathcal L$. {\it Claim} $1$  says  that  $u$ is invariant under $Immm$ and
therefore $u$ is symmetric with respect to the faces of the cube $C(1/2)$. It follows that $u$ vanishes at 
all the straight lines in the Schwarz minimal surface that are at that same height than $\mathcal L$: all the red lines in Figure \ref{rectas}. 

\vspace{.2cm}

The function $u$ is also invariant  by the central symmetry at $(1/4,1/4,$
$1/4)\in C(1/2)$ (recall this motion 
belongs to the group $Immm$) and, therefore, the function $u$ vanishes at the horizontal lines in $\Sigma^{-1}$ at height 
$z=3/8$, too. Therefore $u$ gives an eigenfunction for the fixed boundary problem of the Square Catenoid and
Corollary \ref{corollarysquarecatenoid} contradicts that $\lambda_1^* \leq 0$. That proves the claim.

\vspace{.3cm}

Now we continue with the proof of Theorem \ref{teortwisted}. Recall that, reasoning by contradiction,  we are assuming that 
$\lambda_1^*\leq 0$. Consider two straight lines ${\mathcal L},{\mathcal L}'\subset \Sigma^{-1}$ in the Schwarz surface 
such that ${\mathcal L}\subset \{z=1/8\}$ and ${\mathcal L}'={\mathcal L}+(0,0,1/4)\subset \{z=3/8\}$. 
From {\it Claim} $2$, any function $u \in W_1$ is invariant under the axial symmetries with respect to $\mathcal L$ and 
${\mathcal L}'$. So, $u$ is invariant with respect to the translation $(0,0,1/2)$  (the composition of the two reflections above).
 
 \vspace{.2cm}

In the same way, we have that $u$ is invariant under the translations $(1/ 2,0,0), (0,1/2,0)$ and therefore 
it is well-defined on the Schwarz surface $\Sigma \subset T^3(1/2)$. 
From Theorem \ref{Psurface-ross} we have that $\lambda_1^*=0$ and $u$ is a linear function of the Gauss
map of the Schwarz P surface, $u=\langle N, a\rangle$ for some $a\in \mathbb{R}^3-{0}$.

\vspace{.1cm}

 Let $C=\{z=0\}\cap S$ be the convex curve at the bottom of $\partial S$, Figure \ref{Schwarz1}.  
 As the normal vector $N$ is horizontal along $C$ and $u$ is well-defined in both $\Sigma$ and $\Sigma^*$, 
 it follows that $u=0$ on $C$. Then $a$ is a vertical vector. Repeating the argument for the directions $(1,0,0)$ 
 and  $(0,1,0)$ we get $a=0$ and this contradiction concludes the proof. 
 \end{proof}


\section{The main result}
\begin{figure}[h]
\begin{center}
\includegraphics[width=8cm]{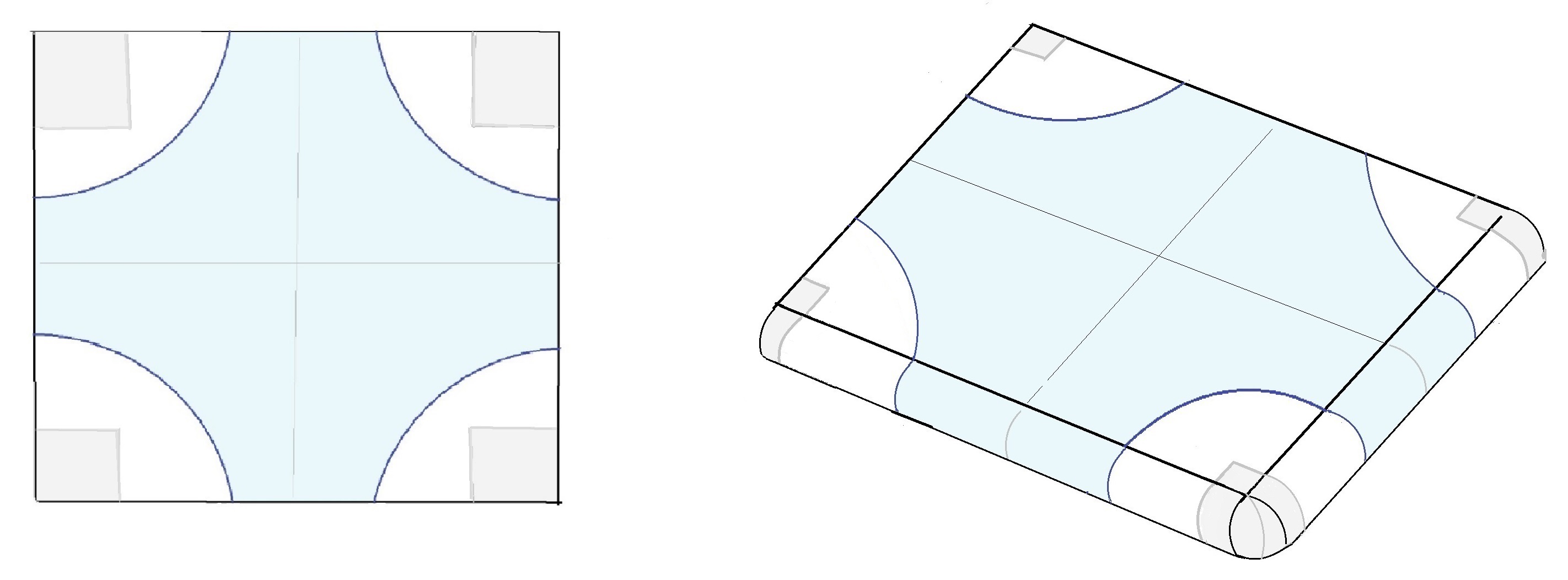}
\caption{Two-dimensional version of the main construction in the proof of Theorem 
\ref{teor0}. The objective is to obtain two blue domains which, after identification, are isometric from 
the intrinsic point of view. One of them lies in the {\it flat projective plane} and the other one
in the usual projective plane  with  a nonnegatively curved metric.  Left: We have 
a blue domain inside the square in the plane meeting its boundary orthogonally.
The sides of the square are identified by the glide motions whose axes are the axes of the figure. 
The generated plane crystallographic group is $pgg$ and the quotient orbifold $T^2/pgg$ is the 
{\it flat projective plane} with two singular points (corresponding to the vertices of the square). 
Right: The planar square has been deformed into a centrally symmetric, smooth convex surface that meets 
the plane $z=0$ orthogonally. The central part remains horizontal and near the edges of the 
square (outside of a neighborhood of its vertices) it has been folded into cylindrical pieces. 
Around the vertices we can deform the initial domain into any convex shape. 
By identifying antipodal points of the boundary, we get a blue domain in the usual projective plane
with a metric of nonnegative curvature. The two constructed blue domains become isometric, as we wanted.
Note that the gluing along the edges is of class $C^1$ at the level of the immersed convex surface, 
but it is of class $C^\infty$ if we just consider the induced flat Riemannian surface structure.}
\label{2D}
\end{center}
\end{figure}

The flat projective space is the quotient of the $3$-torus $T^3(1)$ by the group $I222$
or, equivalently, by identifying the points of the boundary of the cube 
by the order-two screw motions in (\ref{3-screw})
\[
RP^3(1/2) \, \, = \, \, T^3(1)/I222 \, \, = \, \, C(1/2)/{\hspace{-.04cm}\sim}.
\]
To construct an approximation of the {\it singular} flat projective space $RP^3(1/2)$ by nonsingular Riemannian manifolds, 
we consider the hyperplane $\mathbb{R}^3 = \{(x,y,z,t)\in\mathbb{R}^4\, / \, t=0\}$, $\varepsilon  > 0$ and the following  figures:

\begin{itemize}
  \item[$\circ$] the cubes $C(1/2)$ and $C(1/2+\pi\varepsilon)$ in $\mathbb{R}^3$ and the homothety 
  $h:C(1/2)\longrightarrow C(1/2+\pi\varepsilon)$ of ratio $1+ 2\pi\varepsilon$.

\vspace{.12cm}

\item[$\circ$] 
convex ball 
$
B_\varepsilon = \{p\in \mathbb{R}^4 \, / \, dist\big{(}p,C(1/2)\big{)}\leq \varepsilon\}$ in 
$\mathbb{R}^4$ and the top part of its boundary 
$E_\varepsilon = \partial B_\varepsilon \cap \{t\geq 0\}$. 
\end{itemize}

\vspace{.2cm}

$E_\varepsilon$ is a $C^1$ convex graph over 
$\{p\in \mathbb{R}^3\, / \, dist\big{(}p,C(1/2)\big{)}\leq \varepsilon\}\subset\mathbb{R}^3$, its boundary meets orthogonally 
the hyperplane $t=0$ and it consists of the union of the following pieces:
 
\begin{itemize}
  \item[$1)$] 
A flat part parallel to the cube, $C(1/2)+(0,0,0,\varepsilon)$. \vspace{.15cm}
\item[$2)$]
A part made by quarter of cylinders of the type $S^1(\varepsilon) \times \mathbb{R}^2$ following the faces of the cube. 
Each one of these pieces is isometric (as a Riemannian manifold) to a region of $\mathbb{R}^3$. \vspace{.15cm}
\item[$3)$]
Hypercylindrical pieces $\{(x,y,z)\in S^2(\varepsilon)/x,y,z\geq 0\}\times\mathbb{R}$ along the edges of the cube. \vspace{.15cm}
\item[$4)$]
Spherical caps as $\{(x,y,z,t)\in S^3(\varepsilon)\, /\, x,y,z,t\geq 0\}$ centered at the vertices of the cube.
\end{itemize}



\begin{lemma}
\label{lemma}
There are two domains $D\subset C(1/2)$ and $D_\varepsilon \subset E_\varepsilon$ and a dilation map
$\phi:D\longrightarrow D_\varepsilon$ satisfying the following:

\vspace{.1cm}

\begin{itemize}
\item[$i)$] $D$ is equal to $C(1/2)$ minus a small neighborhood of the edges of the cube.

\vspace{.1cm}
\item[$ii)$]
The domain $D_\varepsilon$ is the union of the pieces $1)$ and $2)$ above. 
From the intrinsic point of view, $D_\varepsilon$ is the piecewise smooth $C^\infty$ 
flat Riemannian manifold.
 
\vspace{.1cm}

\item[$iii)$] The ratio of the dilation $\phi:D\longrightarrow D_\varepsilon$ is 
 $1+2\pi\varepsilon$. 
\end{itemize} 
\end{lemma}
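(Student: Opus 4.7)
The plan is to recognize that pieces $1)$ and $2)$ of $E_\varepsilon$, taken together, form a $C^\infty$ flat Riemannian $3$-manifold; to realize it isometrically inside $\mathbb{R}^3$ as a star-shaped subset of the larger cube $C(1/2+\pi\varepsilon)$; and then to define $D$ as the $h^{-1}$-image of this realization.

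The first step is a metric computation. Piece $1)$ sits in $\mathbb{R}^4$ as $\{(x,y,z,\varepsilon):0\le x,y,z\le 1/2\}$, so it is isometric to $C(1/2)$ with the Euclidean metric. For a representative quarter-cylinder of type $2)$, say the one built over the face $\{x=1/2\}$, I would use the parametrization $(y,z,\theta)\mapsto(1/2+\varepsilon\cos\theta,\,y,\,z,\,\varepsilon\sin\theta)$ with $(y,z,\theta)\in[0,1/2]^2\times[0,\pi/2]$. The induced metric is $dy^2+dz^2+\varepsilon^2\,d\theta^2$, and under the change of variable $u=\varepsilon\theta\in[0,\pi\varepsilon/2]$ it becomes the standard Euclidean metric on a flat rectangular prism of dimensions $\tfrac12\times\tfrac12\times\tfrac{\pi\varepsilon}{2}$.

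The second step is to verify that the intrinsic gluing between adjacent pieces is $C^\infty$. Along the common boundary of piece $1)$ and an adjacent piece $2)$, the new coordinate $u$ extends the Euclidean distance-to-the-face coordinate smoothly across the seam, so the two flat metrics match to all orders at the interface. Ambient $C^1$-ness of the hypersurface in $\mathbb{R}^4$ thereby becomes intrinsic $C^\infty$-ness of the flat Riemannian structure, as indicated in the caption of the two-dimensional toy picture. Assembling the central piece with its six face prisms identifies $D_\varepsilon$ isometrically with the subset of $\mathbb{R}^3$ obtained from $C(1/2+\pi\varepsilon)$, concentric with $C(1/2)$, by deleting the twelve edge-tubes and eight corner-cubes of thickness $\pi\varepsilon/2$.

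To finish, I would define $D$ as the $h^{-1}$-image of this subset of $C(1/2+\pi\varepsilon)$, and $\phi$ as the composition of $h$ with the intrinsic identification above. Property $(i)$ is immediate from the construction, property $(ii)$ is the content of the intrinsic description, and property $(iii)$ is exactly the ratio of $h$ since the intrinsic identification is an isometry. The only delicate point in the argument is the smoothness of the gluing, and it succeeds precisely because arc-length along the quarter cylinder plays the role of the Euclidean distance-from-the-face on the other side; any other parametrization of the cylinder would break the $C^\infty$ flat structure at the seam.
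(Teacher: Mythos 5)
Your proposal is correct and follows essentially the same route as the paper: you show that piece $1)$ together with the quarter-cylinder pieces $2)$ is intrinsically a smooth flat manifold, develop it isometrically into $\mathbb{R}^3$ as a subdomain $D'_\varepsilon\subset C(1/2+\pi\varepsilon)$ (the enlarged cube minus edge tubes and corner blocks), and then pull back by the homothety $h$ to get $D$ and the dilation $\phi$. Your explicit parametrization of the quarter-cylinders and the arc-length coordinate $u=\varepsilon\theta$ is just a more detailed version of the paper's observation that a halfspace glued to a semicylinder along their common boundary is, intrinsically, a $C^\infty$ flat manifold isometric to an open subset of $\mathbb{R}^3$.
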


\begin{proof}
 The union along the boundary of the halfspace  
$\{(x,y,z,\varepsilon)\in \mathbb{R}^4\, / \, z\geq 0\}$ and the semi cylinder 
$\mathbb{R}^2\times \{(z,t)\in S^1(\varepsilon)\, /\, z\leq 0\}$ give a hypersurface of class $C^1$ in 
$\mathbb{R}^4$. However, from the Riemannian point of view, the union is a $C^\infty$ manifold 
isometric to an open subset of $\mathbb{R}^3$. 

\vspace{.2cm}

As a consequence, the union of the pieces of type $1)$ and $2)$ is a $C^\infty$ flat Riemannian manifold 
$D_\varepsilon$ contained in $E_\varepsilon$. Furthermore we have:

\vspace{.2cm} 
\begin{itemize}
\item[$\cdot$] A domain $D'_\varepsilon \subset C(1/2+\pi\varepsilon)$ and a Riemannian isometry 
{$f:D'_\varepsilon\longrightarrow D_\varepsilon$.}

\vspace{.1cm} 

\item[$\cdot$] the pullback homothetic image $D=h^{-1}(D'_\varepsilon)$, 
$h:D\longrightarrow D'_\varepsilon$, and

\vspace{.1cm}

\item[$\cdot$] the $C^\infty$-dilation map $\phi=h\circ f$ of ratio $1+2\pi\varepsilon$ between flat manifolds
\[
\phi: D \longrightarrow D_\varepsilon \hspace{1cm} D \subset C(1/2)  \hspace{1cm} {\rm and} 
\hspace{1cm} D_\varepsilon \subset E_\varepsilon.
 \]
\end{itemize}
The transformation $\phi$ is the composition of a Euclidean homothety and a Riemannian isometry.
\end{proof}

\vspace{.2cm}

{\it Proof of Theorem\hspace{-.05cm} \ref{teor0}.}
Since $\partial B_\varepsilon$ is formed by pieces of hyperplanes and hypercylinders of the type $1)$, 
$ 2)$, $3)$ y $4)$ above, it is a convex hypersurface of class $C^1$ but not $C^2$.

A compact convex body in $\Omega\subset \mathbb{R}^{4}$ can be approximated 
 by a sequence of smooth convex bodies $\Omega_n$ such that the principal curvatures of $\partial \Omega_n$  
 are positive and tend to the principal curvatures of $\partial\Omega$ in the sense of the surface area
measure, Gruber \cite{gruber} p 133, Weil \cite{weil}. 
For the case of abstract positively curved polyhedral $3$-manifolds we can also use the alternative 
 smoothing result in  \cite{petrunin}.

Therefore, in our case, we have that the convex hypersurface $\partial B_\varepsilon$
can be approximated in the $C^1$ topology by (a centrally symmetric) strictly convex compact hypersurface $S_\varepsilon$
 of class $C^\infty$. 
Moreover, by Lemma \ref{lemma}, the approximation can be assumed of class $C^\infty$ over $D_\varepsilon$. 

\vspace{.1cm}

For small $\varepsilon >0$, if we identify antipodal points of the boundary 
we obtain two metric copies of the $3$-dimensional projective space  
\[
R\hspace{-.03cm}P^3_{\hspace{-.07cm}\varepsilon} =E_\varepsilon /\hspace{-.12cm}\sim \hspace{1cm} {\rm and}  \hspace{1cm}
\overline{R\hspace{-.03cm}P}_{\hspace{-.12cm}\varepsilon}^3=\overline{E}_\varepsilon /\hspace{-.1cm}\sim
\]

that can be taken arbitrarily close in the $C^1$-topology and satisfy the following:

\vspace{.2cm}

$i)$ $R\hspace{-.03cm}P^3_{\hspace{-.07cm}\varepsilon}$ contains an smooth flat domain 
$U_\varepsilon=D_\varepsilon/\hspace{-.14cm}\sim$ obtained from $D_\varepsilon$ by identifying antipodal 
points of $\partial D_\varepsilon$ at level $t=0$. Furthermore this domain contains itself an dilated image of 
the twisted $P$ minimal surface $\Sigma^*_\varepsilon\subset U_\varepsilon$. 

\vspace{.2cm}

$ii)$ $\overline{R\hspace{-.03cm}P}_{\hspace{-.12cm}\varepsilon}^3$ is a $C^\infty$ Riemannian manifold with positive 
sectional curvature containing a domain $\overline{U}_\varepsilon$ as close as we want of $U_\varepsilon$ in the $C^\infty$ topology. 
 
\vspace{.2cm}

From Theorem \ref{teortwisted}, the twisted $P$ minimal surface $\Sigma^*_\varepsilon$ has index $1$ and nullity $0$ and, 
therefore, the theorem follows from the implicit function theorem. 
 $\square$ 

\vspace{.4cm}

The existence of index one closed embedded minimal surfaces in spaces of positive Ricci curvature follows from 
Ketover, Marques and Neves \cite{ketovermarquesneves}. There are minimal genus Heegaard surfaces, $genus(\Sigma)\leq 2$.  
D. Ketover, Y. Liokumovich and A. Song \cite{ketoverliokumovichsong} obtain index one minimal surfaces in compact Riemannian 
$3$-manifolds, without  curvature assumptions, isotopic to a given strongly irreducible Heegaard splitting. These results follow 
Almgren-Pitts \cite{almgren} and Simon-Smith \cite{simon} approaches.
The surfaces can also been constructed by using Min-max theory for Allen-Cahn equation, see Guaraco \cite{guaraco}, 
Chodosh and Mantoulidis \cite{chodoshmantoulidis} and references therein.

\vspace{.2cm}

Although the Schwarz P surface $\Sigma$ in $T^3(1/2)$ is a Heegaard splitting  of minimum genus, the min-max construction in 
this space does not give the P surface but a multiplicity $2$ horizontal $2$-torus. In  the flat projective space $RP^3(1/2)$, 
the Heegaard surface $\Sigma^*$ is not even a minimum genus. 

\vspace{.1cm}

Another remarkable topological property of these surfaces is the following. 
Let $S$ a Heegaard surface in a closed orientable 3-manifold $M$  and  $\Omega,\Omega'\subset M$ be the two 
complementary handlebodies. The Heegaard splitting is said to be {\it strongly irreducible}
 if any pair of essential discs $\Delta\subset\Omega$ and $\Delta'\subset\Omega'$ intersect at their boundary. 
In our situation, neither the Heegaard splitting given by the Schwarz $P$ surface $\Sigma\subset T^3(1/2)$ nor the one given  
by the twisted $P$ surface $\Sigma^*\subset RP^3(1/2)$ are strongly irreducible. In fact, if we consider the {\it inside}/{\it outside} 
regions determined by the $P$ surface in the cube $C(1/2)$, Figure \ref{Schwarz1}, then the disc $\triangle$ given by 
$\{z=0\}\cap Inside$ is an essential disc and  $\{z=1/4\}\cap Outside$ consists in four quadrants that after identifications, 
in both $T^3(1/2)$ and $RP^3(1/2)$, give a second essential disc $\triangle'$  at the other side, the boundary curves being disjoint.

\vspace{.1cm}

As the strong irreducibility  implies the minimal genus property, see e.g. \cite{irreducible}, 
the \mbox{{\it  index\hspace{-.06cm} $1$/genus\hspace{-.06cm}  $3$} } Schoen' question \cite{neves} 
could also be approached, instead of as in Problem \ref{problem}, as follows:

\begin{problem}
{\it Let $(M,g)$ a compact orientable Riemannian 3-manifolds with positive Ricci curvature and $\Sigma\subset M$ be
a closed two-sided minimal surface of index one and genus $3$. Hence $\Sigma$ is a Heegard splitting, Lawson \cite{lawson}, 
but can it be strongly irreducible?}  
\end{problem}

\vspace{1cm}

{\footnotesize
\noindent
{Antonio Ros},  {\tt aros@ugr.es},\vspace{-.1cm}\\
Department of Geometry and Topology and 
Institute of Mathematics (IMAG),  \vspace{-.1cm}\\
University of Granada,\vspace{-.1cm}
 18071 Granada, Spain.
}

\end{document}